\documentclass[11pt,reqno]{amsart}

\usepackage{tikz-cd}
\usepackage{amsfonts, amssymb, amscd}
\usepackage{amsmath}
\usepackage{verbatim}
\usepackage{amssymb}
\usepackage{mathrsfs}
\usepackage{graphicx}
\usepackage{cite}
\usepackage[all]{xy}
\usepackage{tikz}
\usepackage{subfiles}
\usepackage[toc,page]{appendix}
\usepackage{filecontents}
\usepackage{tikz-cd}
\usepackage{graphicx}
\usepackage{hyperref}
\usepackage{float}
\usepackage{color}

\setcounter{tocdepth}{1}

\def\C {{\mathbb C}}

\def\R {{\mathbb R}}
\def\Z {{\mathbb Z}}

\allowdisplaybreaks[3]

\usetikzlibrary{arrows.meta,decorations.markings}

\tikzset{
    midarrow/.style={
    postaction={decorate},
    decoration={markings, mark=at position #1 with {\arrow{Stealth}}}
    },
    midarrow/.default=0.55
}

\theoremstyle{definition}

\newtheorem{theorem}{Theorem}[section]
\newtheorem{lemma}[theorem]{Lemma}
\newtheorem{proposition}[theorem]{Proposition}
\newtheorem{definition}[theorem]{Definition}
\newtheorem{example}[theorem]{Example}

\newtheorem{remark}[theorem]{Remark}

\numberwithin{equation}{section}

\begin{document}

\title{Hanlon-Hicks-Lazarev resolution revisited}

\begin{abstract}
    Hanlon, Hicks and Lazarev constructed resolutions of structure sheaves of toric substacks by certain line bundles on the ambient toric stacks. In this paper, we give a new and substantially simpler proof of their result.
\end{abstract}

\author{Lev Borisov}
\address{Department of Mathematics\\
Rutgers University\\
Piscataway, NJ 08854} \email{borisov@math.rutgers.edu}

\author{Zengrui Han}
\address{Department of Mathematics\\
University of Maryland\\
College Park, MD 20742} \email{zhan223@umd.edu}

\maketitle
\tableofcontents

\section{Introduction}

Mirror symmetry for toric varieties has been extensively studied  over the last thirty years, going back to the pioneering paper of Batyrev \cite{Batdual}. In particular, there has been a lot of work on bounded derived categories of toric varieties, inspired by the Homological Mirror Symmetry conjectures of Kontsevich.
It  was first observed by Bondal  in \cite{Bondal}, that these  categories are closely related to the derived categories of  constructible sheaves on certain real tori  with respect to stratifications induced by toric hyperplanes defined by ray generators of the fan. This perspective was further developed by Fang, Liu, Treumann, and Zaslow \cite{FLTZ} who related torus-equivariant coherent sheaves on the toric varieties to constructible sheaves on the real tori with microsupport on conic Lagrangian subsets of the cotangent bundles of the real tori called FLTZ skeleta. 
These results recently inspired Hanlon, Hicks and Lazarev \cite{HHL} to construct resolutions of structure sheaves of toric substacks in smooth toric stacks by line bundles on the ambient stacks, which we call Hanlon-Hicks-Lazarev resolutions, or HHL resolutions for short. Their construction has in turn already inspired further papers by multiple authors, see \cite{BBB+,BE,FS,Heller}.

\smallskip

As far as we understand it, the original argument in \cite{HHL} is based on a hyperplane removal strategy. By using discrete Morse theory, the HHL complex can be proven to be homotopy equivalent to the complex associated to the coarser stratification obtained by removing one toric hyperplane on an open subset. Then one keeps removing hyperplanes until arriving at a Koszul-type stratification, thereby obtaining a homotopy equivalence to the standard Koszul resolution on (an open subset) of an affine chart. Running this process for all affine charts proves the result over the complement of a codimension two subset. One then needs extra effort to extend to the whole variety or stack.

\smallskip

The main goal of this paper is twofold. First, we provide a new and substantially simpler proof of the main result of \cite{HHL}. In particular, our argument avoids induction on the maximal cones, the hyperplane removal strategy and the discrete Morse machinery. Second, we restrict to a minimal amount of input data, thereby making our setup cleaner and more concise. We hope this makes this topic more accessible and that our paper could serve as a more user-friendly introduction. 

\smallskip

Now we explain the strategy of our proof and the organization of this paper. In Section \ref{sec-affine-HHL}, we study the HHL complexes on affine spaces. This is a purely commutative-algebra construction that requires little knowledge of toric geometry. 

\smallskip
More precisely, let  $L\cong \Z^n$ be a lattice of rank $n$ with a $\Z$-basis $\{v_i\}$ and let $\psi:L\rightarrow\Lambda$ 
 be a morphism from $L$ to another lattice $\Lambda$ of rank $k$ such that $\mathrm{coker}\,\psi$ is finite.
We then define a complex of free modules over the polynomial ring $R= \C[x_1,\cdots,x_n]$ which we call the affine HHL complex. Our main result is the following Theorem \ref{thm-1}, where the semigroup algebra $\C[C\cap M]$ is defined in Section \ref{sec-affine-HHL}.

\smallskip
\smallskip\noindent
{\bf Theorem \ref{thm-1}.}
    The only nonzero homology of the affine HHL complex \eqref{eq-affine-HHL-complex} is $H_0=\C[C\cap M]$. 

\smallskip

The key ingredient of the proof is a decomposition of the affine HHL complex into a direct sum of complexes of $\C$-vector spaces, where each piece is isomorphic to the cellular chain complex of some polyhedral region on the universal cover of the real torus. 

\smallskip
To get more general HHL complexes, as in \cite{HHL}, it remains to pass from modules to sheaves on $\C^n$, then restrict to toric open subsets $U_\Sigma$ of $\C^n$ and finally use the equivariant nature of the construction to pass to quotients of $U_\Sigma$ by abelian reductive groups, see Theorem \ref{thm-2} and Theorem  \ref{thm-3} in Section \ref{sec-general-HHL}.

\smallskip

We end the introduction with a discussion on related works. Our main result settles a conjecture raised by Michael Brown and Daniel Erman in \cite{BE} regarding the relationship between the resolution constructed therein and the HHL resolution, see Remark \ref{rmk-BE-conjecture} for details. It also seems plausible that our method can be used to give a cleaner proof of the topological formula for the Betti numbers of the minimization of the HHL resolutions in \cite{FS}. For a more general discussion of cellular free resolutions, the reader is referred to \cite{BCHSY}.

\smallskip

{\bf Acknowledgements.} We thank Michael Brown, Daniel Erman and Andrew Hanlon for useful conversations and valuable suggestions regarding an earlier version of this paper. The first author was also inspired by an Oberwolfach talk by Christine Berkesch.

\section{HHL complexes on affine spaces}\label{sec-affine-HHL}

In this section we study the affine case, i.e., HHL complexes that are defined on affine spaces $\C^n$. The context of this section should be accessible and interesting to commutative algebraists or readers who are not familiar with toric geometry. By the homogeneous coordinate ring construction, the general HHL result for toric stacks follows almost effortlessly from this seemingly special situation.

\subsection{Affine HHL complex}\label{subsec-affine-HHL}

We start with a minimal amount of input data. Consider a pair $(L,\psi:L\rightarrow\Lambda)$ where:
\begin{itemize}
    \item $L\cong \Z^n$ is a lattice of rank $n$ with a $\Z$-basis $\{v_i\}$,
    \item $\psi:L\rightarrow\Lambda$ is a morphism from $L$ to another lattice $\Lambda$ of rank $k$ such that $\mathrm{coker}\,\psi$ is finite.
\end{itemize}

\begin{definition}
    We define stratifications on the real torus
    \begin{align*}
        T := \operatorname{Hom}(\Lambda, \R/\Z) = \Lambda^*_{\R} / \Lambda^* \cong (S^1)^k
    \end{align*}
    and its universal cover
    \begin{align*}
        \Lambda^*_{\R} = \operatorname{Hom}(\Lambda, \R)  \cong \R^k
    \end{align*}
    which we call \textit{Bondal stratifications}.
\end{definition}

Each basis vector $v_i$ of $L$ defines a map
\begin{align*}
    H_{i}:\ \Lambda^*_{\R} \rightarrow\R,\quad f\mapsto f(\psi(v_i)).
\end{align*}
The hyperplanes in $\Lambda^*_{\R}$ defined by $H_i = a$ for $a\in \Z$ together define a periodic hyperplane arrangement on the space $\Lambda^*_{\R}$, and we denote the induced stratification by $\widetilde{S}$. Note that they naturally descend to the torus $T$ thereby inducing a stratification there which we denote by $S$. We denote the set of $m$-dimensional strata in $S$ and $\widetilde{S}$ by $S_m$ and $\widetilde{S_m}$, respectively. The fact that $\psi$ has finite cokernel implies that each stratum $\widetilde{\sigma}$ of $\widetilde{S}$ is a bounded convex polytope. For a stratum $\widetilde{\sigma}$ in $\widetilde{S}$, we denote by $H_{i}(\widetilde{\sigma})$ the value of $H_{i}$ at an arbitrary point in the relative interior of $\widetilde{\sigma}$. Note that $\left\lceil H_{i}(\widetilde{\sigma})  \right\rceil $ is well-defined and does not depend on the choice of the point.

\smallskip

We denote the coordinate ring of $\C^n$ by $R=\C[x_1,\cdots,x_n]$. From the stratification $S$ we define a complex of free $R$-modules
\begin{align}\label{eq-affine-HHL-complex}
    0\rightarrow\bigoplus_{\substack{\sigma\in S_{k} }} R\, e_{\sigma} \rightarrow \cdots \rightarrow \bigoplus_{\substack{\sigma\in S_{1}}} R\, e_{\sigma} \rightarrow \bigoplus_{\substack{\sigma\in S_{0}}} R\, e_{\sigma}\rightarrow0,
\end{align}
which we call the \textit{affine HHL complex}, where $e_{\sigma}$ is the generator corresponding to the stratum $\sigma$. 

\smallskip

The differentials are defined as follows. We fix an arbitrary choice of orientations for all strata in the Bondal stratification on the torus $T$. Note that this also induces orientations for all lifts of the strata in the universal cover $\Lambda^*_{\R}$. Let $\sigma$ be an $m$-dimensional stratum in $S$, and $\tau$ an $(m-1)$-dimensional facet of $\sigma$. We take an arbitrary lift $\widetilde{\sigma}$ of $\sigma$ in the universal cover $\Lambda^*_{\R}$, and look at all facets of $\widetilde{\sigma}$ that are mapped to $\tau$ under the quotient map. Note that there may be multiple facets that are mapped to the same stratum on the torus. Let $\widetilde{\tau}$ be such a facet. Then for any $i$, the difference $\epsilon_{i}:=\left\lceil H_{i}(\widetilde{\sigma})  \right\rceil - \left\lceil H_{i}(\widetilde{\tau})  \right\rceil $ is either $0$ or $1$. Therefore we have a map
\begin{align*}
    R\,e_{\sigma}\rightarrow R\,e_{\tau},\quad e_{\sigma}\mapsto \left(\mathrm{sgn}(\widetilde{\sigma},\widetilde{\tau})\prod x_{i}^{\epsilon_{i}}\right) e_{\tau}
\end{align*}
where $\mathrm{sgn}(\widetilde{\sigma},\widetilde{\tau})$ equals to $1$ if the orientations on $\widetilde{\sigma}$ and $\widetilde{\tau}$ are compatible and $-1$ otherwise. The differentials in the affine HHL complex are defined as the sum of all such morphisms. It is straightforward to check that we get a complex, which will also follow from a subsequent identification with a direct sum of certain cellular complexes.

\begin{remark}
    Note that the definition of the complex does not depend on the choices of lifts, and different choices of orientations lead to isomorphic complexes.
\end{remark}

\subsection{Computation of homology}

In this subsection we compute the homology of the affine HHL complex \eqref{eq-affine-HHL-complex}. Note that each component $R\,e_{\sigma}$ is a $\C$-vector space with a basis consisting of monomials $\prod x_i^{a_i} e_{\sigma}$, where all exponents $a_i$ are nonnegative.

\begin{definition}
    We define the \textit{degree} of the monomial $\prod x_i^{a_i} e_{\sigma}$ as
\begin{align*}
    \deg\left(\prod x_i^{a_i} e_{\sigma}\right) := \sum a_i v_i^* + \sum \lceil H_i(\widetilde{\sigma}) \rceil v_i^* \  \mod\Lambda^*
\end{align*}
in the quotient $M:=L^* / \Lambda^*$, where $\widetilde{\sigma}$ is an arbitrary lift of $\sigma$ and $\{v_i^*\}$ is the dual basis to $\{v_i\}$.
\end{definition}

\begin{remark}
    The definition does not depend on the choice of the lift because we work modulo $\Lambda^*$. Also note that $M$ may have torsion, depending on whether $\psi: L\rightarrow \Lambda$ is surjective or not, see Example \ref{eg-torsion}.
\end{remark}

\begin{lemma}
    The degree of a monomial $\prod x_i^{a_i} e_{\sigma}$ remains invariant when applying the differential $d$. As a consequence, the complex \eqref{eq-affine-HHL-complex} splits into a direct sum according to the elements $l\in M$.
\end{lemma}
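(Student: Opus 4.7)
The plan is to prove the two claims in turn, with the bulk of the work being a direct computation showing that each individual term produced by the differential has the same degree as the input monomial.

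First I would unwind the definition of the differential on a monomial. Applied to $\prod x_i^{a_i} e_\sigma$, the differential produces a sum of terms, one for each facet $\tau$ of $\sigma$ and each lift $\widetilde{\tau}$ of $\tau$ that is a facet of the chosen lift $\widetilde{\sigma}$. Each such term has the form
\begin{align*}
\pm \Bigl(\prod x_i^{a_i+\epsilon_i}\Bigr) e_\tau, \qquad \epsilon_i = \bigl\lceil H_i(\widetilde{\sigma})\bigr\rceil - \bigl\lceil H_i(\widetilde{\tau})\bigr\rceil.
\end{align*}
The key observation is that, since the degree of a monomial in $R\,e_\tau$ is well-defined in $M=L^*/\Lambda^*$ independently of the chosen lift of $\tau$, we may use the specific lift $\widetilde{\tau}$ appearing in this term to compute the degree of the output.

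Second I would compute the degree of the output term using this lift. Substituting directly,
\begin{align*}
\deg\!\Bigl(\prod x_i^{a_i+\epsilon_i} e_\tau\Bigr) &= \sum (a_i+\epsilon_i)\, v_i^* + \sum \bigl\lceil H_i(\widetilde{\tau})\bigr\rceil\, v_i^* \\
&= \sum a_i v_i^* + \sum \Bigl(\bigl\lceil H_i(\widetilde{\sigma})\bigr\rceil - \bigl\lceil H_i(\widetilde{\tau})\bigr\rceil\Bigr) v_i^* + \sum \bigl\lceil H_i(\widetilde{\tau})\bigr\rceil\, v_i^* \\
&= \sum a_i v_i^* + \sum \bigl\lceil H_i(\widetilde{\sigma})\bigr\rceil\, v_i^* = \deg\!\Bigl(\prod x_i^{a_i} e_\sigma\Bigr)
\end{align*}
modulo $\Lambda^*$. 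Since every term in $d\bigl(\prod x_i^{a_i} e_\sigma\bigr)$ has the same degree as the input, the differential preserves degree. I expect no real obstacle here; the only subtlety worth flagging is that we are implicitly using the lift-independence of $\deg$ in $M$ so that the freedom to pair each facet $\tau$ with a canonical lift $\widetilde{\tau}\subset\widetilde{\sigma}$ is harmless.

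For the consequence, I would observe that each free module $R\,e_\sigma$ decomposes as a $\C$-vector space
\begin{align*}
R\,e_\sigma = \bigoplus_{l\in M} (R\,e_\sigma)_l,
\end{align*}
where $(R\,e_\sigma)_l$ is spanned by monomials of degree $l$. Since the differential carries $(R\,e_\sigma)_l$ into $\bigoplus_{\tau} (R\,e_\tau)_l$ by the preceding computation, the whole complex \eqref{eq-affine-HHL-complex} decomposes as a direct sum over $l\in M$ of subcomplexes of $\C$-vector spaces, which is exactly the claimed splitting.
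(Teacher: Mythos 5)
Your computation is correct and is essentially identical to the paper's proof: both arguments pick the lift $\widetilde{\tau}\subset\widetilde{\sigma}$ appearing in the differential, substitute $a_i+\epsilon_i$ with $\epsilon_i=\lceil H_i(\widetilde{\sigma})\rceil-\lceil H_i(\widetilde{\tau})\rceil$, and observe the ceilings cancel modulo $\Lambda^*$. The remarks on lift-independence and the resulting direct-sum decomposition over $l\in M$ match the paper's (implicit) treatment, so there is nothing to fix.
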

\begin{proof}
    Take an arbitrary lift $\widetilde{\sigma}$ of $\sigma$ and a facet $\widetilde{\tau}$ of $\widetilde{\sigma}$. By the definition of the differential, the image of the monomial $\prod x_i^{a_i} e_{\sigma}$ has a component $\prod x_i^{a_i+\epsilon_i} e_{\tau}$ where $\epsilon_{i}:=\left\lceil H_{i}(\widetilde{\sigma})  \right\rceil - \left\lceil H_{i}(\widetilde{\tau})  \right\rceil $. Therefore 
    \begin{align*}
    \deg\left(\prod x_i^{a_i} e_{\sigma}\right) &= \sum a_i v_i^* + \sum \lceil H_i(\widetilde{\sigma}) \rceil v_i^* \  \mod\Lambda^* \\
    &=  \sum (a_i+\epsilon_i) v_i^* + \sum \lceil H_i(\widetilde{\tau}) \rceil v_i^* \  \mod\Lambda^* \\
    &= \deg\left(\prod x_i^{a_i+\epsilon_i} e_{\tau}\right). \qedhere
    \end{align*}
\end{proof}

\smallskip

Now we fix an element $l\in M$. The $l$-component is explicitly written as
\begin{align}\label{eq-l-complex}
    \cdots\rightarrow\bigoplus_{\substack{ \sigma\in S_m, (a_i)\in\mathbb{N}^n \\ \deg(\prod x_i^{a_i} e_{\sigma}) = l}} \C \, \prod x_i^{a_i} e_{\sigma} \xrightarrow{d} \bigoplus_{\substack{ \tau\in S_{m-1}, (a_i)\in\mathbb{N}^n\\ \deg(\prod x_i^{a_i} e_{\tau}) = l}} \C \, \prod x_i^{a_i} e_{\tau}\rightarrow\cdots.
\end{align}
We fix an arbitrary lift $\widetilde{l} = \sum_{i=1}^n l_i v_i^*$ of $l$ in $L^*$ and define a polyhedral region 
\begin{align*}
    U_{\widetilde{l}} = \left\{ x\in \Lambda^*_{\R}\ :\ H_i(x) \leq l_i,\ \forall i=1,\cdots,n\right\}
\end{align*}
in the universal cover $\Lambda^*_{\R}$, which may be bounded, unbounded or empty. It is clear that $U_{\widetilde{l}}$ is a union of HHL strata, therefore the Bondal stratification induces a cell decomposition on it. The next proposition shows that the $l$-component of the homology\footnote{The regions $U_{\widetilde{l}}$ defined by different lifts of $l$ differ from each other by a translation in the space $\Lambda^*_{\R}$, so the isomorphism type does not depend on the lifts.} of the HHL complex is isomorphic to the singular homology of $U_{\widetilde{l}}$.

\begin{proposition}\label{prop-isomorphic-to-singular}
    The $l$-component of the HHL complex is isomorphic to the cellular chain complex of $U_{\widetilde{l}}$ induced by the Bondal stratification.
\end{proposition}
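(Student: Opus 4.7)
The plan is to construct an explicit bijection between the basis of the $l$-component in each homological degree and the set of cells of $U_{\widetilde{l}}$ of the same dimension, then verify that under this bijection the differential becomes the cellular boundary map.

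For the bijection, I would start from a basis element $\prod x_i^{a_i} e_{\sigma}$ of degree $l$. The condition $\deg(\prod x_i^{a_i}e_\sigma) \equiv l \pmod{\Lambda^*}$ and the fact that $\psi$ has finite cokernel (so $\Lambda^*\hookrightarrow L^*$) mean that for \emph{any} choice of lift $\widetilde{\sigma}_{\text{init}}$ of $\sigma$ there exists a unique $\lambda\in\Lambda^*$ such that the translated lift $\widetilde{\sigma}:=\widetilde{\sigma}_{\text{init}}-\lambda$ satisfies $\lceil H_i(\widetilde{\sigma})\rceil = l_i-a_i$ for every $i$. Since $a_i\geq 0$, this forces $\lceil H_i(\widetilde{\sigma})\rceil\leq l_i$, and because $l_i\in\Z$ this is equivalent to $H_i(\widetilde{\sigma})\leq l_i$, so $\widetilde{\sigma}\subseteq U_{\widetilde{l}}$. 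Conversely, to each lift $\widetilde{\sigma}\subseteq U_{\widetilde{l}}$ of a stratum $\sigma\in S$ I associate the exponents $a_i := l_i - \lceil H_i(\widetilde{\sigma})\rceil \geq 0$, producing a basis monomial $\prod x_i^{a_i}e_\sigma$ whose degree is exactly $\sum l_i v_i^* = \widetilde{l}$, which represents $l\in M$. These two assignments are mutually inverse, yielding a dimension-preserving bijection between the basis of the $l$-component of \eqref{eq-l-complex} and the cells of the cell decomposition of $U_{\widetilde{l}}$.

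To match the differentials, I would compute the differential using precisely this canonical lift $\widetilde{\sigma}$. By definition $d(\prod x_i^{a_i} e_\sigma)$ is obtained as a sum indexed by facets $\widetilde{\tau}$ of $\widetilde{\sigma}$ in $\widetilde{S}$, grouped by their images $\tau$ in $S$. For each such facet, $\epsilon_i=\lceil H_i(\widetilde{\sigma})\rceil-\lceil H_i(\widetilde{\tau})\rceil$ and the resulting monomial is $\prod x_i^{a_i+\epsilon_i}e_\tau$, which has exponent $l_i-\lceil H_i(\widetilde{\tau})\rceil$ in $x_i$. Under the bijection this monomial corresponds precisely to the facet $\widetilde{\tau}$ of $\widetilde{\sigma}$ itself (rather than some other lift of $\tau$), because $\widetilde{\tau}$ is automatically contained in $\overline{\widetilde{\sigma}}\subseteq U_{\widetilde{l}}$ and its ceilings $\lceil H_i(\widetilde{\tau})\rceil$ realize the required exponents. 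The signs $\mathrm{sgn}(\widetilde{\sigma},\widetilde{\tau})$ in the HHL differential are exactly the compatibility signs between the fixed orientations of $\widetilde{\sigma}$ and its facet $\widetilde{\tau}$, which are by definition the incidence numbers of the cellular boundary map. The asserted isomorphism then follows.

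The only real subtlety, and the point I would take care to address, is that the formulation of the differential in Section~\ref{subsec-affine-HHL} begins with an \emph{arbitrary} lift $\widetilde{\sigma}$ of $\sigma$, yet the bijection above singles out a specific lift determined by $l$ and $(a_i)$. Once one observes that the differential is independent of this choice (as remarked by the authors), evaluating it on the canonical lift makes the correspondence between facets $\widetilde{\tau}$ of $\widetilde{\sigma}$ in the universal cover and basis elements in degree $m-1$ of the $l$-component completely transparent, and no sum-over-lifts rearrangement is needed.
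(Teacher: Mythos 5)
Your proposal is correct and follows essentially the same route as the paper: you set up the same bijection (the unique lift $\widetilde{\sigma}$ with $\lceil H_i(\widetilde{\sigma})\rceil = l_i - a_i$, with $a_i\geq 0$ giving containment in $U_{\widetilde{l}}$, and the inverse $a_i = l_i - \lceil H_i(\widetilde{\sigma})\rceil$), and your differential check is the same uniqueness argument the paper uses to conclude that each facet term corresponds to the facet $\widetilde{\tau}$ itself rather than another lift of $\tau$. Your closing remark about evaluating the differential on the canonical lift is a fair clarification but not a departure from the paper's argument.
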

\begin{proof}
    First of all, we claim that there is a natural bijection between the set of monomials $\prod x_i^{a_i} e_{\sigma}$ with degree $l$ and the set of strata $\widetilde{\sigma}$ on the universal cover $\Lambda^*_{\R}$ that are contained in $U_{\widetilde{l}}$. To see this, recall that $\widetilde{l} = \sum l_i v_i^*$ is a fixed lift of $l$ in $L^*$. Then for each monomial $\prod x_i^{a_i} e_{\sigma}$, there is a unique lift $\widetilde{\sigma}$ such that the following equality holds in $L^*$:
    \begin{align}\label{eq1}
        \sum_{i=1}^n a_i v_i^* + \sum_{i=1}^n \lceil H_i(\widetilde{\sigma}) \rceil v_i^* = \sum_{i=1}^n l_i v_i^*.
    \end{align}
    The condition $a_i\geq 0$ then ensures $\widetilde{\sigma}\subseteq U_{\widetilde{l}}$. On the other hand, for any $\widetilde{\sigma}\in \widetilde{S_m}$ with $\widetilde{\sigma}\subseteq U_{\widetilde{l}}$, it is straightforward to check that
    \begin{align*}
        \prod x_i^{l_i - \lceil H_i(\widetilde{\sigma}) \rceil } e_{\sigma}
    \end{align*}
    corresponds to $\widetilde{\sigma}$ under the correspondence above. The condition $\widetilde{\sigma}\subseteq U_{\widetilde{l}}$ ensures all exponents are non-negative. This gives maps between the complex \eqref{eq-l-complex} and
    \begin{align*}
        \cdots\rightarrow  \bigoplus_{\substack{ \widetilde{\sigma}\in \widetilde{S_m} \\ \widetilde{\sigma}\subseteq U_{\widetilde{l}} }} \C \cdot e_{\widetilde{\sigma}}\xrightarrow{\partial}  \bigoplus_{\substack{ \widetilde{\tau}\in \widetilde{S_{m-1}} \\ \widetilde{\tau}\subseteq U_{\widetilde{l}} }} \C \cdot  e_{\widetilde{\tau}}\rightarrow \cdots
    \end{align*}
    that are inverse to each other. 

    \smallskip
    
    It then suffices to check the compatibility with the differentials. Fix a monomial $\prod x_i^{a_i} e_{\sigma}$ and recall the definition of $d$ from Section \ref{subsec-affine-HHL}. Consider the set $\{\widetilde{\tau_1},\cdots,\widetilde{\tau_r}\}$ of all facets of $\widetilde{\sigma}$ whose image under the quotient map is a fixed facet $\tau$ of $\sigma$. Denote $\epsilon_{i}^{(j)}:=\left\lceil H_{i}(\widetilde{\sigma})  \right\rceil - \left\lceil H_{i}(\widetilde{\tau_j})  \right\rceil $ for all $i=1,\cdots,n$, the monomial $\prod x_i^{a_i} e_{\sigma}$ is mapped to $\left(\sum_{j=1}^r \prod x_i^{a_i+\epsilon_{i}^{(j)}}\right) e_{\tau}$. Now denote the image of $\prod x_i^{a_i+\epsilon_{i}^{(j)}}e_{\tau}$ by $\widehat{\tau_j}$. Then by definition we have
    \begin{align*}
        \sum_{i=1}^n (a_i +\epsilon_{i}^{(j)} )v_i^* + \sum_{i=1}^n \lceil H_i(\widehat{\tau_j}) \rceil v_i^* = \sum_{i=1}^n l_i v_i^*.
    \end{align*}
    A comparison with \eqref{eq1} yields $\left\lceil H_{i}(\widetilde{\tau_j})  \right\rceil = \left\lceil H_{i}(\widehat{\tau_j})  \right\rceil$ for all $i=1,\cdots,n$. This forces $\widetilde{\tau_j} = \widehat{\tau_j}$, which proves the compatibility.
\end{proof}

Since $U_{\widetilde{l}}$ is either empty or contractible, it suffices to determine for which $l\in M$ the region is non-empty. Let $C$ be the cone region in $M$ consisting of points whose preimages in $L^*$ lie in the dual cone $\sigma^{\vee}=\sum_{i=1}^n \R_{\geq 0}v_i^*\subseteq L^*_{\R}$. Note that $C$ is a cone only in a generalized sense since $M$ may have torsion. We will use the following form of the Farkas' Lemma from linear optimization, see e.g. \cite[Example 2.26]{BV}.

\begin{lemma}[Farkas' Lemma]\label{lem-farkas}
    Let $A\in M_{m\times n}(\R)$, $x=(x_1,\cdots,x_n)^{\top}$, $b\in\R^m$ and $Ax\leq b$ be a system of linear inequalities. Then it has a solution if and only if $b$ lies in the region
    \begin{align*}
        \Omega = \left\{Ax+s: x\in\R^n, s\in \R^m_{\geq 0}\right\}.
    \end{align*}
\end{lemma}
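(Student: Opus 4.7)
My plan is to verify both implications directly, since the stated version of Farkas' lemma is really a reformulation of feasibility through the introduction of a slack variable, rather than the usual duality statement about separating hyperplanes.

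For the forward direction, I would take a putative solution $x^*$ of $Ax \leq b$ and set $s := b - A x^*$; the inequalities $(Ax^*)_j \leq b_j$ immediately give $s \in \R^m_{\geq 0}$, and the identity $b = A x^* + s$ exhibits $b \in \Omega$. For the reverse direction, given a representation $b = A x^* + s$ with $s \in \R^m_{\geq 0}$, I would rewrite it as $A x^* = b - s$; since $s$ is coordinatewise nonnegative, this forces $A x^* \leq b$, so $x^*$ solves the system.

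I do not expect any genuine obstacle here: the two conditions are two repackagings of the same existence statement, with $s$ encoding the slack between $Ax$ and $b$, and no separation or convex-duality input is required. The substantive content lies not in the lemma itself but in the way it will be combined with the cone $C$ in the next step to decide for which $l \in M$ the region $U_{\widetilde{l}}$ is nonempty. Concretely, one will apply the lemma with the matrix $A$ built from the linear functionals $H_i$ and with $b = (l_1,\ldots,l_n)$, thereby translating nonemptiness of $U_{\widetilde{l}}$ into membership of the vector $(l_1,\ldots,l_n)$ in the image region $\Omega$; the real work is then to identify that region, modulo $\Lambda^*$, with the cone $C \subseteq M$.
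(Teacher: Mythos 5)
Your proof is correct. Note, however, that the paper does not prove this lemma at all: it is quoted with a citation to Boyd--Vandenberghe, so there is no internal argument to compare against. Your verification makes explicit that the statement \emph{as formulated here} is essentially a tautology: $b\in\Omega$ means by definition that $b=Ax+s$ for some $x$ and some $s\in\R^m_{\geq 0}$, and the slack variable $s=b-Ax$ translates this back and forth into solvability of $Ax\leq b$, exactly as you say. The genuinely nontrivial content usually attached to the name ``Farkas' Lemma'' (a certificate of infeasibility, equivalently a separating-hyperplane or closedness statement about $\Omega$) is not needed for the literal claim, and your closing remark correctly locates where the actual work happens in the paper: in Lemma \ref{lem-criterion-for-nonempty}, where $\Omega$ --- which is the row space of $A$ (i.e.\ the image of $\Lambda^*_{\R}$ in $L^*_{\R}$) plus the nonnegative orthant $\sum_i \R_{\geq 0} v_i^*$ --- is identified with the preimage of the cone region $C$ under $L^*_{\R}\rightarrow M_{\R}$. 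So your proposal is a sound, elementary substitute for the citation, with no gap.
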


\begin{lemma}\label{lem-criterion-for-nonempty}
    The region $U_{\widetilde{l}}$ is non-empty if and only if $l\in C\cap M$.
\end{lemma}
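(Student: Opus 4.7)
The plan is to recast the non-emptiness of $U_{\widetilde{l}}$ as a linear feasibility question and apply Farkas' Lemma (Lemma \ref{lem-farkas}) directly. After fixing a $\Z$-basis of $\Lambda$ to identify $\Lambda^*_\R \cong \R^k$, I would let $A$ be the $n \times k$ integer matrix whose $i$-th row records the coordinates of $\psi(v_i) \in \Lambda$. For $x \in \Lambda^*_\R$, the defining system $H_i(x) = x(\psi(v_i)) \leq l_i$ for $i = 1, \dots, n$ then assembles into the single matrix inequality $Ax \leq b$ with $b = (l_1, \dots, l_n)^\top$, so that $U_{\widetilde{l}}$ is non-empty precisely when this system is feasible.

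Next, Farkas' Lemma gives that the system has a real solution if and only if $b \in \{Ax + s : x \in \R^k,\ s \in \R^n_{\geq 0}\}$. Translating back via the identification $\R^n \cong L^*_\R$ through the dual basis $\{v_i^*\}$, the map $x \mapsto Ax$ becomes the dual map $\psi^* : \Lambda^*_\R \to L^*_\R$, the orthant $\R^n_{\geq 0}$ becomes $\sigma^\vee$, and $b$ corresponds to $\widetilde{l}$. Hence $U_{\widetilde{l}}$ is non-empty if and only if $\widetilde{l} \in \psi^*(\Lambda^*_\R) + \sigma^\vee$ inside $L^*_\R$.

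It remains to match this with the condition $l \in C \cap M$. The inclusion $\widetilde{l} \in \psi^*(\Lambda^*_\R) + \sigma^\vee$ is exactly the statement that the class of $\widetilde{l}$ in $L^*_\R / \psi^*(\Lambda^*_\R)$ lies in the image of $\sigma^\vee$, which is the defining property of the cone region $C$; the condition is independent of the lift chosen for $l \in M$ since different lifts differ by elements of $\psi^*(\Lambda^*) \subseteq \psi^*(\Lambda^*_\R)$. There is essentially no serious obstacle beyond setting up this dictionary—the lemma is a direct rewording of Farkas' Lemma. The only point requiring mild care is that, when $\psi$ is not surjective and $M$ carries torsion, the ``preimage'' in the definition of $C$ should be interpreted through the real quotient $L^*_\R / \psi^*(\Lambda^*_\R)$ rather than purely in the integer lattice $L^*$, so that the equivalence with the Farkas condition remains clean.
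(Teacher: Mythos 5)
Your proof is correct and is essentially the paper's own argument: the paper likewise invokes Farkas' Lemma with $A$ the matrix whose rows are the $\psi(v_i)$ and $b=(l_1,\dots,l_n)$, identifying the Farkas region $\Omega=\psi^*(\Lambda^*_{\R})+\sigma^\vee$ with the preimage of the cone region $C$ under $L^*_{\R}\rightarrow M_{\R}^{\oplus|M_{\mathrm{tor}}|}$. Your closing caveat—that the ``preimage'' in the definition of $C$ must be read through the real quotient $L^*_{\R}/\psi^*(\Lambda^*_{\R})$ rather than via integral lifts in $L^*$—is precisely the interpretation the paper's proof implicitly uses, so the dictionaries agree.
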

\begin{proof}
    We take $A$ to be the matrix whose row vectors are $\psi(v_i)$ for $i=1,\cdots,n$, and $b$ to be $(l_1,\cdots,l_n)$. The region $\Omega$ is exactly the preimage of the cone region $C$ under the map $L^*_{\R}\rightarrow M_{\R}^{\oplus |M_{\mathrm{tor}}|}$.
\end{proof}

\smallskip

In general, let $M = M_{\mathrm{free}}\oplus M_{\mathrm{tor}}$ be a finitely generated abelian group. We say $C\subseteq M_{\R}^{\oplus|M_{\mathrm{tor}}|}$ is a cone region if for each factor $M_{\R}$, the projection of $C$ is a cone region in the usual sense. We call the associated affine scheme $\operatorname{Spec}\C[C\cap M]$ a \textit{generalized affine toric variety}. In our case all components of the cone region $C$ are identical, therefore $\operatorname{Spec}\C[C\cap M]$ is a disjoint union of isomorphic toric subvarieties. Note that the quotient map $L^*\rightarrow M$ induces a natural morphism from it to the affine space $\C^n$ whose image is not necessarily normal.

\begin{theorem}\label{thm-1}
    The only nonzero homology of the affine HHL complex \eqref{eq-affine-HHL-complex} is $H_0=\C[C\cap M]$. 
\end{theorem}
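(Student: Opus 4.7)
The plan is to assemble the pieces already proved: the degree decomposition, the identification of each $l$-component with a cellular chain complex, and Farkas' Lemma. The remaining content is to observe that $U_{\widetilde{l}}$ is contractible when non-empty, and to match the resulting $R$-module structure with $\C[C \cap M]$.

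First I would invoke the decomposition lemma to split the affine HHL complex \eqref{eq-affine-HHL-complex} as a direct sum over $l \in M$ of $l$-components \eqref{eq-l-complex}, and then apply Proposition \ref{prop-isomorphic-to-singular} to identify each $l$-component with the cellular chain complex of the polyhedral region $U_{\widetilde{l}}$ equipped with the Bondal cell decomposition. Since $U_{\widetilde{l}} = \{x \in \Lambda^*_{\R} : H_i(x) \leq l_i,\ \forall i\}$ is a closed convex polyhedron cut out by half-spaces, it is either empty or contractible; its cellular homology is therefore concentrated in degree $0$ whenever non-empty. Combining this with Lemma \ref{lem-criterion-for-nonempty}, each $l$-component contributes a one-dimensional $H_0$ precisely when $l \in C \cap M$, and is acyclic otherwise. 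Consequently, the underlying graded $\C$-vector space of the total $H_0$ is $\bigoplus_{l \in C \cap M} \C$, which matches $\C[C \cap M]$ as a graded vector space and shows vanishing in positive degrees.

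It then remains to verify that the $R$-module structure on this direct sum agrees with that of the semigroup algebra $\C[C \cap M]$. A distinguished cycle representing the generator of the $l$-th piece is $\prod x_i^{l_i - \lceil H_i(\widetilde{\sigma}) \rceil} e_\sigma$ for any chosen $0$-stratum $\sigma$ with $\widetilde{\sigma} \subseteq U_{\widetilde{l}}$, and multiplication by $x_i$ shifts the degree by $v_i^* \bmod \Lambda^*$, which is exactly the image of $v_i^*$ in $C \cap M$; since $v_i^* \in \sigma^\vee$, this precisely matches the action of $v_i^*$ on $\C[C \cap M]$ induced by the quotient map $L^* \to M$.

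The main subtlety I would watch for is the claim that the cellular chain complex on the right-hand side of Proposition \ref{prop-isomorphic-to-singular} really does compute the singular homology of $U_{\widetilde{l}}$, so that the convexity argument can be applied. This is standard once one notes that the Bondal stratification restricts to a finite polyhedral, hence regular CW, decomposition of $U_{\widetilde{l}}$ in which every closed cell is a convex polytope. Everything else is bookkeeping.
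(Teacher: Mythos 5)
Your proposal is correct and follows essentially the same route as the paper: split by degree, apply Proposition \ref{prop-isomorphic-to-singular} to identify each $l$-component with the cellular chain complex of the convex region $U_{\widetilde{l}}$, use contractibility together with Lemma \ref{lem-criterion-for-nonempty}, and then match module structures. The only difference is that you spell out details the paper leaves as ``straightforward'' (the regular CW structure justifying cellular~$=$~singular homology, and the $x_i$-action sending generator to generator via $v_i^*\bmod\Lambda^*$), which is a welcome addition rather than a deviation.
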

\begin{proof}
    The higher homology vanish and the 0-th homology is isomorphic to $\C[C\cap M]$ as $C$-vector spaces by Proposition \ref{prop-isomorphic-to-singular} and Lemma \ref{lem-criterion-for-nonempty}. It is straightforward to see the compatibility with module structures.
\end{proof}

\begin{remark}\label{rmk-BE-conjecture}
    It was pointed out by Michael Brown, Daniel Erman and Andrew Hanlon that the main result in this paper settles a conjecture of Brown and Erman \cite[p.3]{BE}. More precisely, our main result implies that the HHL complex, when viewed as a complex of graded modules over the homogeneous coordinate ring $S$, is a resolution of the normalization of $S/I$ where $I$ is the defining ideal of the subvariety $Y$. This shows that the resolution constructed in \cite{BE} is the minimization of the HHL complex.
\end{remark}

\begin{remark}
We observe that our proof of Theorem \ref{thm-1} does not use that our base field is $\C$. In fact, it even works over $\Z$. However, we choose to focus on the case of $\C$ for cultural reasons.
\end{remark}

\subsection{Examples}

\begin{example}[Non-normal toric subvariety $(x^3-y^2)\subseteq \mathbb{A}^2$]

    Consider $L = \Z^2$ with the standard basis, and the map $L\rightarrow\Lambda = \Z$ defined by $(1,0)\mapsto 3$ and $(0,1)\mapsto -2$. In this example the real torus is 1-dimensional and the stratification consists of 4 points and 4 segments.
    \begin{figure}[H]
        \centering
    \begin{tikzpicture}[x=4cm, y=1cm] 
        \draw (-0.1,0) -- (1.1,0);

    \draw (0,0.06)--(0,-0.06)       node[below=2pt] {$0$};
    \draw (1/3,0.06)--(1/3,-0.06)   node[below=2pt] {$\tfrac{1}{3}$};
    \draw (1/2,0.06)--(1/2,-0.06)   node[below=2pt] {$\tfrac{1}{2}$};
    \draw (2/3,0.06)--(2/3,-0.06)   node[below=2pt] {$\tfrac{2}{3}$};
    \draw (1,0.06)--(1,-0.06)       node[below=2pt] {$1$};

    \foreach \p in {0,1/3,1/2,2/3, 1}{
        \fill (\p,0) circle (2.2pt);
    }
    \end{tikzpicture}
    \hspace{1.5cm}
    \begin{tikzpicture}[scale=1.2] 
        \foreach \start in {0,-90,-180,-270}{
        \draw[midarrow] (\start:1)
        arc[start angle=\start, delta angle=-90, radius=1];
    }

    \foreach \ang in {0,90,180,270}{
        \fill (\ang:1) circle (1.5pt);
    }
    \end{tikzpicture}
    \caption{The Bondal stratification on the universal cover and the torus}
    \end{figure}
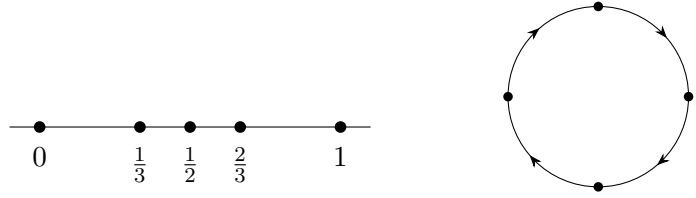
    The associated HHL complex is 
    \begin{align*}
        0\rightarrow 
        R^4\xrightarrow{\begin{pmatrix} -1 & 0 & 0 & x \\ x & -y & 0 & 0 \\ 0 & 1 & -1 & 0 \\ 0 & 0 & x & -y \end{pmatrix}} R^4 \rightarrow 0 
    \end{align*}
    where $R=\C[x,y]$ is the coordinate ring. It is straightforward to compute the homology at the last spot to be the quotient of $R^2$ by $(y,x)^{\top}$ and $(x^2,-y)^{\top}$, which is isomorphic to the normalization $\C[t]$ of the coordinate ring $\C[x,y]/(x^3-y^2)$ of the cuspidal curve.
\end{example}

\begin{example}[A disjoint union of toric subvarieties]\label{eg-torsion}
Now consider again $L = \Z^2$ with the fan $\Sigma$ given by the first orthant, but this time the map $L\rightarrow\Lambda = \Z^2$ defined by $(1,0)\mapsto (2,-1)$ and $(0,1)\mapsto (-1,2)$. The stratification of the 2-dimensional real torus has three 2-cells, six 1-cells and three 0-cells.

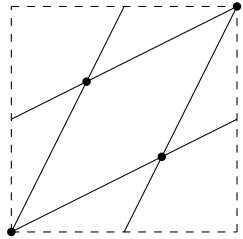
\begin{figure}[H]
    \centering
    \begin{tikzpicture}[x=3cm,y=3cm] 
        \draw[dashed] (0,0) -- (1,0);
        \draw[dashed] (0,0) -- (0,1);
        \draw[dashed] (0,1) -- (1,1);
        \draw[dashed] (1,0) -- (1,1);
        \draw (0,0) -- (0.5,1);
        \draw (0,0) -- (1,0.5);
        \draw (0,0.5) -- (1,1);
        \draw (0.5,0) -- (1,1);
        \foreach \p in {(0,0),(1,1),(1/3,2/3),(2/3,1/3)}{\fill \p circle (1.6pt);}
\end{tikzpicture}
\caption{The stratification on $[0,1]^2$}
\end{figure}

In this example $M=L^*/\Lambda^*\cong \Z/3\Z$ is torsion. The associated HHL complex is
\begin{align}\label{eq-torsion}
    0\rightarrow 
        R^3\xrightarrow{\begin{pmatrix} -x & 0 & 1 \\ y & -1 & 0 \\ 0 & 1 & -x \\ -1 & 0 & y \\ 1 & -x & 0 \\ 0 & y & -1 \end{pmatrix}} R^6 \xrightarrow{\begin{pmatrix} 0 & 1 & 1 & 0 & -y & -x \\ -y & -x & 0 & 1 & 1 & 0 \\ 1 & 0 & -y & -x & 0 & 1 \end{pmatrix}} R^3 \rightarrow 0 
\end{align}
The $0$-th homology is isomorphic to $\C[\Z/3\Z]$, which is a direct sum of three copies of $\C$.

\end{example}

\section{HHL complexes on smooth toric stacks}\label{sec-general-HHL}

First of all note that in the last section we were actually considering the nonnegative orthant fan of $L_{\R}$ (i.e., the fan consisting of all faces of $\sum_{i=1}^n \R_{\geq0} v_i$). The result can be easily generalized by considering arbitrary subfan $\Sigma$ of the nonnegative orthant fan. Now we consider a triple $(L,\Sigma,\psi: L\rightarrow \Lambda)$ where $L$ and $\psi$ are defined as before and $\Sigma$ is an arbitrary subfan of the nonnegative orthant fan. Note that it defines a toric variety $U_{\Sigma}$ which is the open subset of $\C^n$ of points whose set of zero coordinates lies in a cone of $\Sigma$.

\begin{theorem}\label{thm-2}
    Let $\psi:L\rightarrow\Lambda$ be a map of lattices with finite cokernel, and $\Sigma$ be a subfan of the first orthant fan. Then the corresponding HHL complex
    \begin{align}\label{eq-local-HHL-complex}
        0\rightarrow\bigoplus_{\substack{\sigma\in S_{k} }} \mathcal{O}_{U_{\Sigma}}\, e_{\sigma} \rightarrow \cdots \rightarrow \bigoplus_{\substack{\sigma\in S_{1}}} \mathcal{O}_{U_{\Sigma}}\, e_{\sigma} \rightarrow \bigoplus_{\substack{\sigma\in S_{0}}} \mathcal{O}_{U_{\Sigma}}\, e_{\sigma}\rightarrow0
    \end{align}
    is the resolution of the restriction of $i_*\mathcal{O}_{Y}$ to the open subset $U_{\Sigma}\subseteq\C^n$, where $Y$ is the generalized affine toric variety defined as $\operatorname{Spec}\C[C\cap M]$ and $i:Y\rightarrow\C^n$ is the natural morphism.
\end{theorem}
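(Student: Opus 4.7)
The plan is to deduce Theorem \ref{thm-2} from Theorem \ref{thm-1} via two exactness-preserving operations: sheafification on $\C^n$, followed by restriction to the open subset $U_\Sigma$. The key observation enabling this reduction is that the Bondal stratification $S$ of the torus $T$ depends only on the pair $(L,\psi)$ and is independent of $\Sigma$. Consequently, the combinatorial shape of the complex \eqref{eq-local-HHL-complex} is identical to that of the affine HHL complex \eqref{eq-affine-HHL-complex}, with each summand $R\,e_\sigma$ replaced by $\mathcal{O}_{U_\Sigma}\,e_\sigma$ and the same monomial differentials.

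Concretely, I would first sheafify the affine HHL complex on $\C^n=\operatorname{Spec}R$. Sheafification of $R$-modules is an exact functor, so by Theorem \ref{thm-1} the resulting complex of quasi-coherent sheaves has vanishing cohomology sheaves in positive degrees and $\mathcal{H}^0$ equal to the sheaf associated to $\C[C\cap M]$. By the definition of $Y=\operatorname{Spec}\C[C\cap M]$ and of the morphism $i:Y\rightarrow\C^n$ induced by the quotient $L^*\twoheadrightarrow M$, this $\mathcal{H}^0$ is precisely $i_*\mathcal{O}_Y$. Thus I obtain a locally free resolution of $i_*\mathcal{O}_Y$ on all of $\C^n$, with terms $\bigoplus_{\sigma\in S_m}\mathcal{O}_{\C^n}\,e_\sigma$.

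Second, I would restrict this resolution to the open subset $U_\Sigma\subseteq\C^n$. Since restriction of sheaves to an open subset is exact and takes $\mathcal{O}_{\C^n}$ to $\mathcal{O}_{U_\Sigma}$, the restricted complex is exactly \eqref{eq-local-HHL-complex}, and it remains exact except at the rightmost spot, where its cohomology is $(i_*\mathcal{O}_Y)|_{U_\Sigma}$. This gives the desired statement.

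I do not anticipate any serious obstacle: all the homological content has been extracted in the affine case, and the passage to $U_\Sigma$ is formal because neither the strata nor the differentials depend on $\Sigma$. The only point worth verifying carefully is the tautological identification of the sheafification of $\C[C\cap M]$ with $i_*\mathcal{O}_Y$, which is immediate from the definitions.
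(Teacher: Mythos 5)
Your proposal is correct and follows essentially the same route as the paper: the complex \eqref{eq-local-HHL-complex} is just the sheafified affine HHL complex restricted (localized) to $U_\Sigma$, and since passing to sheaves and restricting to an open subset are exact, Theorem \ref{thm-1} immediately gives the claimed resolution of $(i_*\mathcal{O}_Y)|_{U_\Sigma}$. The identification of the sheaf associated to $\C[C\cap M]$ with $i_*\mathcal{O}_Y$ is indeed tautological since $i$ is a morphism of affine schemes, exactly as you note.
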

\begin{proof}
    The corresponding HHL complex is the localization of the affine HHL complex on the open subset $U_{\Sigma}$ described above. The result follows from the fact that taking homology commutes with localization.
\end{proof}

We are ready to generalize the result to toric stacks. We will need additional data. Consider a quadruple $(L,\Sigma,\psi: L\rightarrow \Lambda, G)$, where
\begin{itemize}
    \item $L\cong \Z^n$ is a lattice of rank $n$,
    \item $\Sigma$ is a subfan of the first orthant fan,
    \item $\psi:L\rightarrow\Lambda$ is a morphism from $L$ to another lattice $\Lambda$ of rank $k$ such that $\mathrm{coker}\,\psi$ is finite.
    \item $G$ is an algebraic group that maps to $\mathrm{ker}(L\otimes\C^*\rightarrow\Lambda\otimes\C^*)$. 
\end{itemize}
Note that the algebraic group $\mathrm{ker}(L\otimes\C^*\rightarrow\Lambda\otimes\C^*)$ is exactly $\operatorname{Spec}\C[M]$ and is a product of an algebraic torus with a finite abelian group. Any algebraic group $G$ that maps to it naturally acts on the HHL complex as well as its homology, which restricts to a $G$-equivariant structure. Denote the quotient stack $[U_{\Sigma}/G]$ by $\mathcal{U}_{\Sigma,G}$, then by the standard equivalence (see \cite{Vistoli})
\begin{align}\label{eq-equivalence}
    \mathrm{Coh}^{G}(U_{\Sigma})\xrightarrow{\sim}\mathrm{Coh}(\mathcal{U}_{\Sigma,G})
\end{align}
we obtain a complex of direct sums of line bundles on the toric stack $\mathcal{U}_{\Sigma,G}$. The line bundles in this complex are from the so-called \textit{Thomsen-Bondal collection}, see \cite[Section 2.3]{HHL}.

\smallskip

\begin{definition}
    Let $\sigma \in S$ be a stratum in the Bondal stratification of the real torus $T$. Let $\widetilde{\sigma}\in \widetilde{S}$ be an arbitrary lift of $\sigma$. We define
    \begin{align*}
        \mathcal{O}_{\mathcal{U}_{\Sigma,G}}(\sigma):=\mathcal{O}_{\mathcal{U}_{\Sigma,G}}\left( - \sum_{i=1}^n \left\lceil H_{i}(\widetilde{\sigma})  \right\rceil D_{i} \right).
    \end{align*}
    Note that the isomorphism class of the bundle does not depend on the choice of the lift $\widetilde{\sigma}$. We call the set of isomorphism classes of such line bundles  the \textit{Thomsen-Bondal collection} of $\mathcal{U}_{\Sigma,G}$.
\end{definition}

\begin{definition}
    We call the complex of line bundles on $\mathcal{U}_{\Sigma,G}$ 
    \begin{align}\label{eq-HHL-complex}
        0\rightarrow\bigoplus_{\substack{\sigma\in S_{k} }} \mathcal{O}_{\mathcal{U}_{\Sigma,G}}(\sigma) \rightarrow \cdots \rightarrow \bigoplus_{\substack{\sigma\in S_{1}}} \mathcal{O}_{\mathcal{U}_{\Sigma,G}}(\sigma) \rightarrow \bigoplus_{\substack{\sigma\in S_{0}}} \mathcal{O}_{\mathcal{U}_{\Sigma,G}}(\sigma)\rightarrow0
    \end{align}
    that corresponds to the affine HHL complex under the equivalence \eqref{eq-equivalence} the \textit{HHL complex} associated to the input data $(L,\Sigma,\Lambda,\psi: L\rightarrow \Lambda, G)$.
\end{definition}

\smallskip

\begin{theorem}\label{thm-3}
    The HHL complex \eqref{eq-HHL-complex} on $\mathcal{U}_{\Sigma,G}$ is a resolution of $i_*\mathcal{O}_{\mathcal{Y}}$, where $\mathcal{Y}$ is the quotient stack $[Y/G]$ and $i:\mathcal{Y}\rightarrow \mathcal{U }_{\Sigma,G}$ is induced by the corresponding maps between varieties.
\end{theorem}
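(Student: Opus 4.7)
The plan is to deduce Theorem \ref{thm-3} from Theorem \ref{thm-2} by promoting the localized HHL complex to a $G$-equivariant complex and then applying the equivalence \eqref{eq-equivalence}. The first step is to recognize that $\operatorname{Spec}\C[M] = \ker(L\otimes\C^*\to\Lambda\otimes\C^*)$ is naturally a subgroup of the dense torus of $\C^n$ and therefore preserves $U_\Sigma$. I claim that the splitting of the affine HHL complex by $M$-degrees, established in Section \ref{sec-affine-HHL}, is exactly a $\operatorname{Spec}\C[M]$-equivariant structure on the complex: one declares the free generator $e_\sigma$ to carry the character $\sum_i \lceil H_i(\widetilde{\sigma})\rceil v_i^*\bmod\Lambda^*\in M$, and the lemma asserting that $d$ preserves the $M$-degree becomes exactly the equivariance of $d$. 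Restricting along $G\to\operatorname{Spec}\C[M]$ then turns the localized complex \eqref{eq-local-HHL-complex} into a $G$-equivariant resolution of the $G$-equivariant sheaf $i_*\mathcal{O}_Y|_{U_\Sigma}$, where the equivariant structure on the latter is the one coming from the $M$-grading of $\C[C\cap M]$.

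The second step is to transport this equivariant resolution across \eqref{eq-equivalence}. Since the equivalence is exact, the image is automatically a resolution of the image of $i_*\mathcal{O}_Y|_{U_\Sigma}$, and the latter is $i_*\mathcal{O}_{\mathcal{Y}}$ by the compatibility of pushforward with the quotient morphisms $U_\Sigma\to\mathcal{U}_{\Sigma,G}$ and $Y\to\mathcal{Y}$. To identify the individual terms, I would invoke the standard character-to-divisor dictionary on a toric quotient stack: the trivial line bundle $\mathcal{O}_{U_\Sigma}$ equipped with $G$-equivariant structure of character $\sum_i n_i v_i^*\bmod\Lambda^*$ corresponds under \eqref{eq-equivalence} to the line bundle $\mathcal{O}_{\mathcal{U}_{\Sigma,G}}\bigl(-\sum_i n_i D_i\bigr)$, where $D_i$ is the divisor pulled back from $\{x_i = 0\}\subseteq\C^n$. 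Specializing to $n_i = \lceil H_i(\widetilde{\sigma})\rceil$ produces exactly $\mathcal{O}_{\mathcal{U}_{\Sigma,G}}(\sigma)$, matching the terms of \eqref{eq-HHL-complex}.

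The main obstacle is essentially bookkeeping: one must fix a consistent sign convention for the character-to-line-bundle dictionary and check that the exponents $\lceil H_i(\widetilde{\sigma})\rceil$ land on the correct side. Concretely, $x_i$ is a regular function on $U_\Sigma$ of weight $-v_i^*$ under the dense torus, and one needs to verify that it descends to a section of $\mathcal{O}_{\mathcal{U}_{\Sigma,G}}(D_i)$ vanishing simply along $D_i$, so that tensoring with the equivariant structure of character $\sum_i\lceil H_i(\widetilde{\sigma})\rceil v_i^*$ produces the bundle with divisor $-\sum_i\lceil H_i(\widetilde{\sigma})\rceil D_i$. Once this convention is pinned down, no further computation is required: the differentials in \eqref{eq-HHL-complex} are the images of those of \eqref{eq-local-HHL-complex} under \eqref{eq-equivalence}, and exactness away from degree zero together with the identification of $H_0$ transport directly from Theorem \ref{thm-2}.
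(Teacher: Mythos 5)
Your proposal is correct and follows essentially the same route as the paper, which treats Theorem \ref{thm-3} as an immediate consequence of Theorem \ref{thm-2}: the $M$-grading gives the $\operatorname{Spec}\C[M]$- (hence $G$-) equivariant structure on the localized complex, and the exact equivalence \eqref{eq-equivalence} transports the resolution and identifies its terms with the Thomsen--Bondal bundles $\mathcal{O}_{\mathcal{U}_{\Sigma,G}}(\sigma)$. Your extra care with the character-to-divisor sign convention is a detail the paper absorbs into its definition of the stacky HHL complex, so nothing is missing.
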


\begin{example}
    In Example \ref{eg-torsion} the kernel of $L\otimes\C^*\rightarrow \Lambda\otimes\C^*$ is isomorphic to $\Z/3\Z$. If we take $G = 1$ then we are in the genuine variety cases described therein. If we take $G=\Z/3\Z$, then the complex \eqref{eq-torsion} descends to a complex of line bundles on the quotient stack $[\C^2/(\Z/3\Z)]$ which resolves the structure sheaf of a point.
\end{example}

\begin{remark}
The definition of toric stacks we used in this paper is close to but more general than the one introduced in \cite{BCS},
for instance it allows for non-separated smooth toric stacks. We also allow some "phantom generators", as in the work of Jiang  \cite{Jiang}, in the sense we don't have to assume that all rays of the first orthant in $L$ are cones in $\Sigma$.
\end{remark}

\smallskip

We end this paper by a comparison between our setting and the original one in \cite{HHL}. The definition of the toric stacks used in \cite{HHL} is the one introduced by Geraschenko and Satriano \cite{GS}. More precisely, the input data is $(L,\Sigma,\beta:L\rightarrow N)$ where $L$, $N$ are lattices, $\Sigma$ is a fan on $L$ and $\beta$ a lattice map with finite cokernel. The associated toric stack $\mathcal{X}_{\Sigma,\beta}$ is then defined as the quotient stack $[X_{\Sigma}/G_{\beta}]$, where $G_{\beta}$ is the kernel of $L\otimes\C^*\rightarrow N\otimes\C^*$ induced by $\beta$. They start with a closed embedding of toric stacks $\mathcal{Y}\hookrightarrow \mathcal{X}$ coming from a diagram
\begin{align*}
    \xymatrix{
        L_Y \ar@{^(->}[r] \ar[d]^{\beta_Y} & L_X \ar[d]^{\beta_X} & \\
        N_Y \ar@{^(->}[r]^{\phi} & N_X 
    }
\end{align*}
together with smooth fans $\Sigma_Y$ and $\Sigma_X$ on $L_Y$ and $L_X$ respectively. Here $\beta_Y$ is the restriction of $\beta_X$ to $L_Y$, and the fan $\Sigma_Y$ is necessarily obtained by intersecting cones of $\Sigma_X$ with the subspace $(L_Y)_{\R}$. The real torus is then defined as $\operatorname{Hom}(\mathrm{coker}\,\phi, \R/\Z)$ and the HHL complex is built from the stratification on it. Their main result states that the HHL complex on $\mathcal{X}=[X_{\Sigma}/G_{\beta_X}]$ is a resolution of the pushforward of the structure sheaf of $\mathcal{Y}=[Y_{\Sigma}/G_{\beta_Y}]$.

\smallskip

To connect the setup of \cite{HHL} to Theorem \ref{thm-3}, we define 
$L = \Z^{\Sigma(1)}$ and
\begin{align*}
    \widetilde{\Sigma} = \left\{ \mathrm{Cone}(e_{\rho}:\rho\in \sigma) : \sigma\in\Sigma \right\}.
\end{align*}
Then the toric variety $X_\Sigma$ can be obtained as the quotient of $U_{  \widetilde{\Sigma} }$ by the appropriate group,
via the homogeneous coordinate ring construction of Cox \cite{Cox}. In particular, $L$ naturally maps to $L_X$. We then take $\Lambda = \mathrm{coker}\,\phi$ (which is free by an assumption of \cite{HHL}), and define the lattice map $\psi:L\to \Lambda$ to be the composition of the maps
 $$L\rightarrow L_X\rightarrow N_X\rightarrow \mathrm{coker}\,\phi.$$
One can then view the sheaves on $X_\Sigma$ as the sheaves on $U_\Sigma$ with the equivariant structure for the group $G= \mathrm{ker}\,(L \otimes \C^* \rightarrow N_X \otimes \C^*)$, which is a subgroup of  $\mathrm{ker}(L\otimes\C^*\rightarrow\Lambda\otimes\C^*)$.
The HHL complex defined by the input data in \cite{HHL} then matches with the complex defined in this paper with input data $(L,\widetilde{\Sigma},\psi,G)$.

\smallskip

\end{document}